\title{Coefficient Extraction Formula and Furstenberg's Theorems}
\author{YINING HU \\
CNRS, Institut de Math\'ematiques de Jussieu-PRG \\
Universit\'e Pierre et Marie Curie, Case 247 \\
4 Place Jussieu \\
F-75252 Paris Cedex 05 (France) \\
{\tt yining.hu@imj-prg.fr}}
\date{}
\begin{document}

\maketitle

\begin{abstract}
In this article, using a Proposition of Furstenberg, we give a coefficient extraction formula for algebraic series that is valid for all fields, 
of which the Flajolet-Soria coefficient extraction formula for the complex field is a special case.

\end{abstract}

\section{Introduction}
Combinatorists often use the ``Flajolet-Soria'' formula, first published in \cite{soria},  that gives an explicit expression for the coefficients of an algebraic power series.
Our main theorem is that (a generalization of) this result can be deduced from a theorem of Furstenberg going back to 1967 \cite [Proposition 2] {fu}.

In the paper of Furstenberg, a useful notion for the study of multiple variable formal power series is their diagonals. 
For the formal power series in $\kappa ((x_1,...,x_m)) $
$$f(x_1,x_2,...,x_m)=\sum_{n_i>-\mu}a_{n_1n_2...n_m}x_1^{n_1}x_2^{n_2}...x_m^{n_m}$$
its (principal) diagonal $\mathscr{D}f(t)$ is defined as the element in $\kappa((t)) $
$$\mathscr{D}f(t)=\sum a_{nn...n}t^n.$$

Furstenberg \cite{fu} proved the following results:

\newtheorem {theorem}{Theorem}
\newtheorem {prop}{Proposition}
\begin{theorem}\emph{(Furstenberg)}
\label{furst1}
Let $\kappa$ be a field of positive characteristic. Let $f(x_1,...,x_m)$ be an element of $\kappa ((x_1,...,x_m)) \cap \kappa (x_1,...,x_m)$, that is, 
$f$ is a formal power series of several variables that represents a rational function. Then $\mathscr{D}f(t) $, the diagonal of $f$, is algebraic over $\kappa(t)$.
\end{theorem}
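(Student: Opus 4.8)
The plan is to prove a stronger, Frobenius-flavoured statement that makes the diagonal transparent in characteristic $p:=\operatorname{char}\kappa$. First I would reduce to the case where $\kappa$ is perfect: replacing $\kappa$ by its perfect (or algebraic) closure does not change algebraicity of $\mathscr{D}f$ over $\kappa(t)$, and it lets me take $p$-th roots of coefficients freely. After pulling out a monomial factor I may also assume $f=P/Q$ with $P,Q\in\kappa[x_1,\dots,x_m]$ and $Q(\mathbf 0)\neq 0$, so that $f\in\kappa[[x_1,\dots,x_m]]$; the effect of monomial factors on the diagonal is controlled and harmless. The central objects are the Cartier (Frobenius-section) operators: for $\mathbf r\in\{0,\dots,p-1\}^m$ set $\Lambda_{\mathbf r}\big(\sum_{\mathbf n}a_{\mathbf n}\mathbf x^{\mathbf n}\big)=\sum_{\mathbf n}a_{p\mathbf n+\mathbf r}^{1/p}\mathbf x^{\mathbf n}$, so that $h=\sum_{\mathbf r}\mathbf x^{\mathbf r}\Lambda_{\mathbf r}(h)^{p}$, with the analogous one-variable operators $\Lambda^{(t)}_r$ on $\kappa((t))$. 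The proof rests on two facts, which I will establish separately: (A) any finite-dimensional $\kappa(t)$-subspace of $\kappa((t))$ that is stable under all $\Lambda^{(t)}_r$ consists of elements algebraic over $\kappa(t)$; and (B) the diagonal intertwines the two families of operators, $\Lambda^{(t)}_r\circ\mathscr{D}=\mathscr{D}\circ\Lambda_{(r,\dots,r)}$, which is immediate from comparing coefficients.

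The heart of the argument is to exhibit a finite-dimensional operator-stable space containing $f$. I would take $B=\{\,N/Q : N\in\kappa[x_1,\dots,x_m],\ \deg_{x_i}N\le d_i\,\}$ with $d_i\ge\deg_{x_i}Q$ chosen large enough that $f\in B$; this is a finite-dimensional $\kappa$-vector space. The key computation is that $B$ is stable under every $\Lambda_{\mathbf r}$. Writing $N/Q=NQ^{p-1}/Q^{p}$ and noting $Q^{p}=\widetilde Q(\mathbf x^{p})$, where $\widetilde Q$ is $Q$ with its coefficients raised to the $p$-th power, the semilinearity relation $\Lambda_{\mathbf r}(\mathbf x^{p\mathbf s}w)=\mathbf x^{\mathbf s}\Lambda_{\mathbf r}(w)$ together with $1/\widetilde Q=\sigma(1/Q)$ (for $\sigma$ the coefficientwise Frobenius) collapses the denominator and yields $\Lambda_{\mathbf r}(N/Q)=\Lambda_{\mathbf r}(NQ^{p-1})/Q$. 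Since $\Lambda_{\mathbf r}$ divides partial degrees by $p$, the numerator $\Lambda_{\mathbf r}(NQ^{p-1})$ again has $\deg_{x_i}\le (d_i+(p-1)d_i)/p=d_i$, so $\Lambda_{\mathbf r}(N/Q)\in B$.

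With $B$ in hand I would conclude as follows. By (B), the image $\mathscr{D}(B)\subseteq\kappa((t))$ is a finite-dimensional $\kappa$-space stable under each $\Lambda^{(t)}_r$, and $\mathscr{D}f\in\mathscr{D}(B)$. Passing to the $\kappa(t)$-span $V=\kappa(t)\cdot\mathscr{D}(B)$ keeps the dimension finite over $\kappa(t)$ and preserves stability, because $\Lambda^{(t)}_r(t^{s}\psi^{p}w)=t^{\varepsilon}\psi\,\Lambda^{(t)}_{r'}(w)$ for suitable $\varepsilon\in\{0,1\}$ and $r'$, after decomposing each rational coefficient along $\kappa(t)=\bigoplus_{s}t^{s}\kappa(t^{p})$. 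Thus $V$ is a finite-dimensional, $\Lambda^{(t)}_r$-stable $\kappa(t)$-subspace of $\kappa((t))$ containing $\mathscr{D}f$, and (A) finishes the proof.

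The main obstacle is fact (A), the algebraicity criterion, and it is where positive characteristic is used essentially. Given such a $V$ with $\kappa(t)$-basis $v_1,\dots,v_d$, the decomposition $v_j=\sum_r t^{r}\Lambda^{(t)}_r(v_j)^{p}$ together with $\Lambda^{(t)}_r(v_j)\in V$ expresses each $v_j$ as a $\kappa(t)$-linear combination of the $p$-th powers $v_i^{p}$. Hence the field $E=\kappa(t)(v_1,\dots,v_d)$ satisfies $E=\kappa(t)\cdot E^{p}=E^{p}(t)$ (using $\kappa=\kappa^{p}\subseteq E^{p}$), which forces $[E:E^{p}]\le p$ because $t^{p}\in E^{p}$. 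Since $E$ is finitely generated over the perfect field $\kappa$, one has $[E:E^{p}]=p^{\operatorname{trdeg}(E/\kappa)}$, so $\operatorname{trdeg}(E/\kappa)\le1$, i.e.\ $\operatorname{trdeg}(E/\kappa(t))=0$; thus $E/\kappa(t)$ is algebraic and $\mathscr{D}f\in E$ is algebraic over $\kappa(t)$. I expect the delicate points to be the perfectness reduction that legitimizes the $p$-th roots in $\Lambda_{\mathbf r}$, the denominator-collapsing identity of the second paragraph, and the clean transcendence-degree count just sketched.
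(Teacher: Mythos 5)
Your proof is correct in outline, but it takes a genuinely different route from the paper. The paper deduces Theorem \ref{furst1} in three lines from two cited black boxes: Salon's multivariate generalization of Christol's theorem (finiteness of the $p$-kernel of an algebraic multivariate series) and Christol's theorem in one variable, observing that the $p$-kernel of the diagonal sequence $(u_{n,\dots,n})_n$ sits inside the restriction to the diagonal of the multivariate kernel. That argument is short but, as written, is confined to finite fields $\mathbb{F}_q$, since ``finite kernel'' is a counting statement about sequences with coefficients in a finite set; the theorem as stated concerns an arbitrary field of positive characteristic. Your argument replaces ``finite $p$-kernel'' by ``finite-dimensional Cartier-stable $\kappa(t)$-subspace of $\kappa((t))$'' and is self-contained: the stability of $B=\{N/Q:\deg_{x_i}N\le d_i\}$ under $\Lambda_{\mathbf r}$ via $\Lambda_{\mathbf r}(N/Q)=\Lambda_{\mathbf r}(NQ^{p-1})/Q$ and the degree bound $(d_i+(p-1)d_i)/p=d_i$ is exactly the computation the paper performs separately in Proposition \ref{bun} (there with $q$ in place of $p$ and no $p$-th roots), but you then conclude not by counting numerators over a finite field and invoking Christol, but by the intertwining $\Lambda^{(t)}_r\circ\mathscr{D}=\mathscr{D}\circ\Lambda_{(r,\dots,r)}$ and the transcendence-degree criterion $[E:E^p]=p^{\operatorname{trdeg}_\kappa E}$ for finitely generated extensions of a perfect field. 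This buys the full generality of the statement (any field of characteristic $p$, after the harmless passage to the perfect closure), at the cost of invoking that standard but nontrivial fact about the imperfection degree; it is essentially the Sharif--Woodcock/Adamczewski--Bell style of argument. Two points deserve a little more care than your sketch gives them: the reduction from Laurent series to $f=P/Q$ with $Q(\mathbf 0)\neq0$ turns the principal diagonal of $f$ into an \emph{off}-diagonal $\sum_n g_{n+a_1,\dots,n+a_m}t^n$ of a power series $g$, which you should convert back to a principal diagonal of $\mathbf x^{\mathbf c}g$ up to a factor $t^{-M}$ and finitely many terms; and the identity $[E:E^p]=p^{\operatorname{trdeg}}$ needs the hypothesis that $E$ is finitely generated over the perfect field $\kappa$, which holds here since $E=\kappa(t,v_1,\dots,v_d)$.
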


\begin{prop}
\label{furst}
\emph{(Furstenberg)}
Let $P(X,Y)$ be a polynomial and $\varphi(X)=\sum ^\infty _1 c_n X^n$ a formal power series in $\kappa((X))$ satisfying $P(X, \varphi (X))=0$. 
If $(\partial P/\partial Y)(0,0) \neq 0$, then $$\varphi=\mathscr{D}\{Y^2 \frac{\partial P}{\partial Y}(XY,Y)/P(XY,Y) \}.$$
Here $\kappa$ is an arbitrary field.
\end{prop}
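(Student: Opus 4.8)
The plan is to factor the distinguished root out of $P$ and then compute the diagonal of the resulting partial-fraction decomposition term by term. First I would observe that, since $\varphi\in\kappa[[X]]$ satisfies $P(X,\varphi(X))=0$, the factor theorem in the polynomial ring $\kappa[[X]][Y]$ gives a factorization $P(X,Y)=(Y-\varphi(X))\,\Omega(X,Y)$ with $\Omega\in\kappa[[X]][Y]$. Differentiating and using the product rule (so that no logarithm is needed, which matters since $\kappa$ is arbitrary) yields the purely algebraic identity $P_Y/P=1/(Y-\varphi)+\Omega_Y/\Omega$. The hypothesis $(\partial P/\partial Y)(0,0)\neq 0$ enters here: evaluating $P_Y=\Omega+(Y-\varphi)\Omega_Y$ at $(0,0)$ (recall $\varphi(0)=0$) gives $\Omega(0,0)=P_Y(0,0)\neq 0$, so $\Omega$, and after the substitution below also $\Omega(XY,Y)$, is a unit in $\kappa[[X,Y]]$.

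Next I would perform the substitution $X\mapsto XY$ in the first argument and multiply by $Y^2$, obtaining
\[
g(X,Y):=Y^2\frac{P_Y(XY,Y)}{P(XY,Y)}=\frac{Y^2}{Y-\varphi(XY)}+Y^2\frac{\Omega_Y(XY,Y)}{\Omega(XY,Y)}.
\]
Because $\Omega(XY,Y)$ is a unit, both summands lie in $\kappa[[X,Y]]$, so each has a well-defined diagonal and $\mathscr{D}$ is additive on them. It then remains to compute the two diagonals separately.

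The engine of both computations is the elementary remark that for any $R\in\kappa[[U,V]]$ and any integer $a$, writing $R=\sum_{p,q}r_{pq}U^pV^q$ gives $Y^aR(XY,Y)=\sum_{p,q}r_{pq}X^pY^{p+q+a}$, whose coefficient of $X^nY^n$ forces $p=n$ and $q=-a$; hence $\mathscr{D}[Y^aR(XY,Y)]$ equals $R(t,0)$ when $a=0$ and is $0$ when $a\geq 1$. Applying this with $a=2$ and $R=\Omega_Y/\Omega$ shows the second summand has diagonal $0$. For the first summand I would factor $Y-\varphi(XY)=Y(1-D)$ with $D:=\varphi(XY)/Y=\sum_{n\geq 1}c_nX^nY^{n-1}$, a series with zero constant term, so that $Y^2/(Y-\varphi(XY))=Y\sum_{m\geq 0}D^m$. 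Expanding $D^m$ as a sum of monomials $X^{\sum n_i}Y^{(\sum n_i)-m}$ and imposing the diagonal condition $\sum n_i=(\sum n_i)-m+1$ forces $m=1$; the surviving term $YD=\sum_n c_nX^nY^n$ has diagonal $\sum_n c_nt^n=\varphi(t)$. Combining the two gives $\mathscr{D}g=\varphi$.

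The main obstacle I anticipate is not any single computation but the bookkeeping that legitimizes splitting the diagonal: I must make sure the partial-fraction identity holds in $\mathrm{Frac}(\kappa[[X,Y]])$, that the substitution $X\mapsto XY$ (substitution of a series with zero constant term) is admissible, and above all that after multiplying by $Y^2$ each piece is a genuine power series so that $\mathscr{D}$ applies and is additive. Once the decomposition is justified, the two diagonal evaluations are forced by matching exponents, and the characteristic of $\kappa$ plays no role.
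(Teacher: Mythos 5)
Your proof is correct. Note that the paper itself does not prove this proposition: it is quoted verbatim from Furstenberg's 1967 article, so there is no internal proof to compare against. Your argument --- factor $P=(Y-\varphi)\Omega$ by the factor theorem over $\kappa[[X]]$, observe $\Omega(0,0)=P_Y(0,0)\neq 0$ so that $\Omega(XY,Y)$ is a unit, split $Y^2P_Y(XY,Y)/P(XY,Y)$ via the algebraic logarithmic-derivative identity, and kill the $\Omega_Y/\Omega$ term and all powers $D^m$ with $m\neq 1$ by matching exponents on the diagonal --- is essentially Furstenberg's original proof, and every step (the admissibility of the substitution $X\mapsto XY$, the membership of each summand in $\kappa[[X,Y]]$, the additivity of $\mathscr{D}$) is justified as you describe.
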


\begin{theorem}\emph{(Furstenberg)}
\label{furst2}
Let $\mathbb{F}_q$ be a finite field of characteristic $p$. If a formal power series $\phi (X)\in \mathbb{F}_q ((X))$ is algebraic over $\mathbb{F}_q(X)$, 
then $\phi=\mathscr{D}(R(X,Y))$ for a formal power series in two variables $R(X,Y)\in \mathbb{F}_q(X,Y)$ that represents a rational function of X and Y.
\end{theorem}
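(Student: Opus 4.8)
The plan is to deduce the theorem from Proposition~\ref{furst} by transforming an arbitrary algebraic $\phi$ into a series meeting the Proposition's two hypotheses: that it is a power series with zero constant term, and that it is the nondegenerate root (i.e. $(\partial P/\partial Y)(0,0)\neq 0$) of a polynomial. Throughout I will use three elementary facts about $\mathscr{D}$, all immediate from the definition: (i) $\mathscr{D}$ is $\mathbb{F}_q$-linear; (ii) if $g=\mathscr{D}(S)$ with $S$ rational, then $t^k g=\mathscr{D}\bigl((XY)^k S\bigr)$, since the diagonal of $(XY)^k S$ is $g$ shifted by $k$; and (iii) every Laurent polynomial $\sum_n a_n t^n$ equals $\mathscr{D}\bigl(\sum_n a_n (XY)^n\bigr)$, hence is itself such a diagonal. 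By (i)--(iii), splitting off the finite principal part and the constant term of $\phi$ reduces matters at once to the case $\phi\in X\,\mathbb{F}_q[[X]]$; and I may assume $\phi$ is not a polynomial, as (iii) otherwise finishes the proof.

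The second step removes inseparability. Writing exponents in base $q$ gives the Frobenius decomposition $\phi=\sum_{r=0}^{q-1} X^r H_r(X^q)$, where $H_r=\sum_k a_{qk+r}X^k$ are the $q$-sections; since $c^q=c$ on $\mathbb{F}_q$ we have $H_r(X^q)=H_r(X)^q$. A direct check gives $\mathscr{D}(R^q)=(\mathscr{D}R)^q$, so combined with (ii) a rational representation $H_r=\mathscr{D}(R_r)$ yields $\phi=\mathscr{D}\bigl(\sum_{r=0}^{q-1}(XY)^r R_r^{\,q}\bigr)$, which is again a diagonal of a rational function. It therefore suffices to represent each $H_r$. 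The sections are themselves algebraic --- indeed the $q$-kernel is finite by Christol's theorem --- and iterating the decomposition (tracking that the inseparable degree cannot grow under sectioning) reduces everything to the \emph{separable} case, where the real work lies.

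So assume now that $\phi\in X\,\mathbb{F}_q[[X]]$ is separable over $\mathbb{F}_q(X)$, with irreducible primitive minimal polynomial $m(X,Y)\in\mathbb{F}_q[X,Y]$; then $\phi$ is a \emph{simple} root of $m$ over $\overline{\mathbb{F}_q((X))}$. The key transversalization step separates $\phi$ from the other Puiseux roots $\psi_2,\dots,\psi_D$ of $m$. Choosing $N\ge\max_{j\ge 2}\operatorname{ord}(\psi_j-\phi)$ and writing $\phi^{(\le N)}$ for the truncation of $\phi$, I substitute $Y\mapsto \phi^{(\le N)}(X)+X^{N}Y$ in $m$ and divide by the largest power $X^{\ell}$ dividing the result in $\mathbb{F}_q[X][Y]$, obtaining $P(X,Y)\in\mathbb{F}_q[X,Y]$; the series $V:=(\phi-\phi^{(\le N)})/X^{N}$, of order $\ge 1$, then satisfies $P(X,V)=0$. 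Under this rescaling every competing branch $\psi_j$ $(j\ge 2)$ ceases to pass through the origin (it either escapes to a pole or takes a nonzero value at $X=0$), so $V=0$ is the unique --- and, by simplicity of $\phi$, simple --- root of $P(0,Y)$; that is, $P(0,0)=0$ and $(\partial P/\partial Y)(0,0)\neq 0$. Proposition~\ref{furst} now applies to $(P,V)$ and gives $V=\mathscr{D}\bigl\{Y^{2}\tfrac{\partial P}{\partial Y}(XY,Y)/P(XY,Y)\bigr\}$, whence by (ii)--(iii) $\phi=\phi^{(\le N)}+X^{N}V=\mathscr{D}\bigl(\sum_{n\le N}a_n(XY)^n+(XY)^{N}\,Y^{2}\tfrac{\partial P}{\partial Y}(XY,Y)/P(XY,Y)\bigr)$.

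The main obstacle is precisely this transversalization: proving rigorously that after truncating and rescaling the competing branches all leave the origin, so that $(\partial P/\partial Y)(0,0)\neq 0$. This is a Newton-polygon computation --- factoring $P=c(X)\prod_j\bigl(Y-V_j(X)\bigr)$ over $\overline{\mathbb{F}_q((X))}$ and comparing $X$-valuations shows that exactly the single finite branch survives the specialization $X=0$, forcing $V=0$ to be a simple root of $P(0,Y)$. The argument uses in an essential way that $\phi$ is a simple root, hence separable, which is exactly why the inseparability reduction of the second step must be carried out first; making the termination of that reduction fully precise (equivalently, solving the finite system of section relations simultaneously) is the remaining point that requires care.
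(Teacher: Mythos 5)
The paper itself contains no proof of Theorem~\ref{furst2}: it is quoted from Furstenberg, and Section~4.1 only recalls that his construction starts from a Frobenius-linear relation $A_0\phi^{q^l}+\cdots+A_n\phi^{q^{n+l}}=0$ as in \eqref{relation}, which is turned into an annihilating polynomial $\sum_i A_i(X)Y^{q^{i+l}}$ to which Proposition~\ref{furst} is applied after normalization. Your route is genuinely different: you keep the minimal polynomial $m$ of $\phi$ and force the hypotheses of Proposition~\ref{furst} by the truncate-and-rescale substitution $Y\mapsto\phi^{(\le N)}+X^NY$, checking $(\partial P/\partial Y)(0,0)\neq 0$ by a Newton-polygon argument. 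This is the ``transversalization'' device of Adamczewski and Bell \cite{Adam-Bell}; it avoids Ore's additive relation entirely and works verbatim in characteristic $0$, at the price of valuation-theoretic bookkeeping on the conjugate roots. One correction there: in characteristic $p$ the conjugates of $\phi$ need not be Puiseux series (the Puiseux field is not algebraically closed over $\mathbb{F}_q((X))$), so you must work with roots in $\overline{\mathbb{F}_q((X))}$ equipped with the unique extension of the $X$-adic valuation. With that reading, your choice of $N$ does give the other branches valuation $\le 0$ while $V$ has valuation $\ge 1$, and the multiplicity count showing that $Y=0$ is a simple root of $P(0,Y)$ goes through.

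The one genuine gap is the one you flag yourself: the termination of the inseparability reduction. Knowing that ``the inseparable degree cannot grow under sectioning'' does not make that iteration terminate. Fortunately the entire step is vacuous: every $\phi\in\mathbb{F}_q((X))$ algebraic over $\mathbb{F}_q(X)$ is automatically separable. Indeed $1,X^{1/p},\dots,X^{(p-1)/p}$ remain linearly independent over $\mathbb{F}_q((X))$ inside $\mathbb{F}_q((X^{1/p}))$, since their supports lie in the pairwise disjoint sets $\mathbb{Z}+i/p$; hence $\mathbb{F}_q((X))$ is linearly disjoint from $\mathbb{F}_q(X)^{1/p}=\mathbb{F}_q(X^{1/p})$ over $\mathbb{F}_q(X)$, and by MacLane's criterion $\mathbb{F}_q((X))/\mathbb{F}_q(X)$ is a separable extension, as is its subextension $\mathbb{F}_q(X)(\phi)$. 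Deleting your second step (which also quietly invokes Christol's theorem --- a tool heavier than the statement being proved, and precisely the direction that the paper later derives \emph{from} Theorem~\ref{furst2}, so best avoided here) leaves a correct proof.
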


On the other hand, on a finite field, the formal power series algebraic over the field of rational fractions are characterized by
a simple combinatorial property:
\begin{theorem}\emph{(Christol)}
\label{christol}
Let $\mathbb{F}_q$ be a finite field of characteristic $p$. A formal power series
$$f=\sum_{n=0}^{\infty} u_n X^n\in \mathbb{F}_q[[X]] $$
is algebraic over the rational function field $\mathbb{F}_q(X)$ if and only if
the $p$-kernel of $u$ $$\{ (u_{n\cdot p^k+r})_n\;|\; k\in \mathbb{N}, r=0,1,...,p^k-1 \}$$ is finite.
\end{theorem}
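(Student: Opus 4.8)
The plan is to prove both implications by working with the $q$-section operators, where $q = |\mathbb{F}_q| = p^s$, rather than the $p$-section operators directly. For $r \in \{0,\dots,q-1\}$ define $\Lambda_r\bigl(\sum_n a_n X^n\bigr) = \sum_n a_{qn+r}X^n$. Since each $q$-section is a composite of $s$ $p$-sections and, conversely, every $p$-section of a $q$-kernel element is one of finitely many series, the $p$-kernel is finite if and only if the \emph{$q$-kernel} $\{\Lambda_{r_k}\cdots\Lambda_{r_1}f\}$ is finite; so I may replace $p$ by $q$ throughout. The advantage is the identity $g(X)^q = g(X^q)$, valid for $g \in \mathbb{F}_q[[X]]$ because $c^q = c$ for $c \in \mathbb{F}_q$, which yields the clean decomposition
$$f = \sum_{r=0}^{q-1} X^r (\Lambda_r f)^q.$$
I will also use the standard criterion that a series $z \in \mathbb{F}_q((X))$ is algebraic over $L := \mathbb{F}_q(X)$ if and only if the powers $\{z^{q^k}\}_{k \ge 0}$ span a finite-dimensional $L$-vector space: a linear dependence $\sum_k \lambda_k z^{q^k} = 0$ is exactly a nonzero additive polynomial $\sum_k \lambda_k Y^{q^k}$ vanishing at $z$.

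For the implication \emph{finite kernel $\Rightarrow$ algebraic}, write the (finite) $q$-kernel as $K = \{f_1,\dots,f_d\}$ with $f_1 = f$; by construction $K$ is stable under every $\Lambda_r$. Applying the decomposition to each $f_i$ expresses it as an $L$-linear combination of $\{f_1^q,\dots,f_d^q\}$, so that $M_0 \subseteq M_1$ where $M_k := \operatorname{span}_L\{f_1^{q^k},\dots,f_d^{q^k}\}$. Raising these relations to the $q^k$-th power, and using that the Frobenius maps $L$ into $L$, gives $M_k \subseteq M_{k+1}$ for all $k$. Each $M_k$ has $L$-dimension at most $d$, so this increasing chain of subspaces stabilizes; hence $\bigcup_k M_k$ is finite-dimensional. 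Since $f^{q^k} \in M_k$ for every $k$, the powers $\{f^{q^k}\}$ lie in a finite-dimensional $L$-space, and the criterion above shows $f$ is algebraic.

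For the implication \emph{algebraic $\Rightarrow$ finite kernel}, I will invoke Furstenberg's Theorem \ref{furst2}: an algebraic $f$ equals $\mathscr{D}R$ for some $R = P/Q \in \mathbb{F}_q(X,Y)$, and it then suffices to bound the $q$-kernel of a diagonal of a rational function. The key computation is that the bivariate $q$-section $\Lambda_{a,b}R := \sum_{i,j} [X^{qi+a}Y^{qj+b}]R \cdot X^iY^j$ of a rational function is again rational with a \emph{controlled} shape: using $Q^q = Q(X^q,Y^q)$, write $R = PQ^{q-1}/Q(X^q,Y^q)$ and split the numerator $PQ^{q-1}$ according to exponent classes modulo $q$, which shows $\Lambda_{a,b}R = N_{a,b}/Q$ where $N_{a,b}$ is a piece of $PQ^{q-1}$ of bounded degree. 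Iterating, every iterated bivariate $q$-section has denominator $Q$ and a numerator whose degree is bounded independently of the number of iterations, so these sections range over a \emph{finite} set (finitely many polynomials of bounded degree over a finite field). Finally, sectioning commutes with taking diagonals in the form $\Lambda_r \mathscr{D}R = \mathscr{D}(\Lambda_{r,r}R)$, whence the $q$-kernel of $f = \mathscr{D}R$ is contained in the finite set $\{\mathscr{D}S : S \text{ an iterated bivariate section of } R\}$, and is therefore finite.

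The main obstacle is the control of the iterated bivariate sections in the third paragraph: one must verify that sectioning genuinely preserves the single denominator $Q$ and keeps the numerator degrees uniformly bounded, so that the family of sections is finite rather than merely rational. The identity $Q^q = Q(X^q,Y^q)$ is precisely what makes this work over $\mathbb{F}_q$; had I used $p$-sections directly, the Frobenius would twist the coefficients of $P$ and $Q$, and I would have to track the (finitely many) coefficient twists as well, which is why reducing to $q$-sections at the outset is worthwhile. The commutation identity $\Lambda_r\mathscr{D}R = \mathscr{D}(\Lambda_{r,r}R)$ also requires a short but careful check at the level of coefficients.
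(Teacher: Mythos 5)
Your proof is correct, but it should be compared carefully with what the paper actually does: Theorem \ref{christol} is quoted as Christol's result, and the only part the paper establishes itself is the direction \emph{algebraic $\Rightarrow$ finite kernel}, stated as the Corollary to Proposition \ref{bun}. For that direction your argument coincides with the paper's: invoke Furstenberg's Theorem \ref{furst2} to write $f=\mathscr{D}R$ with $R=P/Q$ and $Q(0,0)\neq 0$; use the Cartier-operator identity $\Lambda_{r,s}(A^qB)=A\,\Lambda_{r,s}(B)$ (the paper's Lemma in Section 3) with $A=1/Q$ to see that every iterated bivariate section has the form $N/Q$ with $\deg N<\deg P+\deg Q$, hence that only finitely many occur over a finite field; and pass to the diagonal via $\Lambda_r\mathscr{D}R=\mathscr{D}(\Lambda_{r,r}R)$. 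Your preliminary reduction from the $p$-kernel to the $q$-kernel is likewise the paper's closing remark in the proof of Proposition \ref{bun}. What you add, and what the paper omits entirely, is the converse direction: the decomposition $f=\sum_{r=0}^{q-1}X^r(\Lambda_rf)^q$, the increasing chain $M_k=\operatorname{span}_L\{f_1^{q^k},\dots,f_d^{q^k}\}$ of subspaces of dimension at most $d$, and the resulting $\mathbb{F}_q(X)$-linear dependence among $f,f^q,\dots,f^{q^d}$, which yields a nonzero additive annihilating polynomial. That argument is the standard one and is sound (an increasing chain of subspaces of bounded dimension stabilizes, so all Frobenius powers of $f$ lie in one finite-dimensional space), so your proposal is strictly more complete than the paper, which takes that direction of Christol's theorem as given.
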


These notions are revisited in this article. 
In Section 2, it is proved that the Flajolet-Soria \cite{soria} formula for coefficients of algebraic series over $\mathbb{C}(X)$ 
is a consequence of Proposition \ref{furst} of Furstenberg, with which we can obtain a similar formula that can be applied to all fields.
In Section 3, a simple proof of Theorem \ref{furst1} of Furstenberg is given using a generalization of Christol's Theorem. On the other hand, Theorem \ref{furst2}
of Furstenberg gives another proof of one direction of Christol's Theorem.
In Section 4, in order to construct a rational function whose diagonal is a given algebraic function, an algorithm is given, which calculates an annihilating polynomial 
of an algebraic function. Finally a method of finding (in $\mathbb{F}_q[[X]]$) roots of polynomials in $\mathbb{F}_q(X)[Y]$ using automata is illustrated with examples.

Among several papers related to diagonals of multivariate
formal power series, we would like to cite two recent works:
a paper by Adamczewski and Bell \cite{Adam-Bell} about a
quantitative version of the theorem of Furstenberg, and the
doctoral thesis of Lairez that gives, in particular, an interesting
characterization of binomial sums in terms of diagonals
\cite[Ch.~III]{Lairez}. 

\section{Coefficients of an algebraic series }
There is a link between combinatorial objects and power series in $\mathbb{C}$. The study of generating functions using the tools of
complex analysis gives us information about a combinatorial structure. The following theorem from \cite{soria} (see also \cite{ba} and \cite{sokal})
allows us to extract the coefficients of an algebraic series in $\mathbb{C}$ from an annihilating polynomial of it. For more reference on the formula, one can look at the
article by Banderier and Drmota\cite{ba}.
\\

 For an element $A$ in $\mathbb{F}_q((X,Y))$ 
$ A=\sum_{m,n}a_{m,n}X^mY^n$, we let  $[X^mY^n]A$ denote the coefficient $a_{m,n}$.

\begin{theorem}
\emph{(The Flajolet–Soria formula for coefficients of algebraic series)}
\label{soria}
Let $P(X,Y)$ be a polynomial over the complex field such that $P(0,0)=0$ and $P'_Y(0,0)=0$. 
The coefficients of the algebraic series $f(X)=\sum f_n X^n$, defined implicitly by $f(X)=P(X,f(X))$, 
have the form of an infinite sum $$f_n = \sum _{m\geq 1} [X^n Y^{m-1}](1-P'_Y(X,Y))P^m(X,Y).$$
\end{theorem}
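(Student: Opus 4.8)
The plan is to recognize $f$ as a root of the polynomial $Q(X,Y) := Y - P(X,Y)$ and then apply Proposition~\ref{furst} of Furstenberg directly. First I would verify the hypotheses. Since $f = P(X,f)$ we have $Q(X,f(X)) = 0$, and because $P(0,0)=0$ together with $P'_Y(0,0)=0$ forces the implicitly defined $f$ to have zero constant term, $f$ is of the form $\sum_{n\geq 1} f_n X^n$ required by the Proposition. The crucial nondegeneracy condition is immediate: $(\partial Q/\partial Y)(0,0) = 1 - P'_Y(0,0) = 1 \neq 0$.

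Applying Proposition~\ref{furst} to $Q$ then gives
$$f = \mathscr{D}\left\{ \frac{Y^2\bigl(1 - P'_Y(XY,Y)\bigr)}{Y - P(XY,Y)} \right\}.$$
The next step is to rewrite the bracketed rational function as an explicit formal power series. I factor $Y$ from the denominator, writing $Y - P(XY,Y) = Y\bigl(1 - P(XY,Y)/Y\bigr)$; the hypotheses $P(0,0)=0$ and $P'_Y(0,0)=0$ guarantee that $P(XY,Y)/Y$ is a genuine power series vanishing at the origin, so the geometric expansion $\bigl(1 - P(XY,Y)/Y\bigr)^{-1} = \sum_{k\geq 0}\bigl(P(XY,Y)/Y\bigr)^k$ is legitimate in the formal (that is, $(X,Y)$-adic) sense. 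This yields
$$G(X,Y) := \frac{Y^2\bigl(1 - P'_Y(XY,Y)\bigr)}{Y - P(XY,Y)} = \sum_{k\geq 0} \bigl(1 - P'_Y(XY,Y)\bigr) P(XY,Y)^k\, Y^{1-k}.$$

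Finally I would read off $f_n = [X^n Y^n]G$ term by term. The key bookkeeping identity is that the substitution $X \mapsto XY$ acts on coefficients by $[X^n Y^c]F(XY,Y) = [X^n Y^{c-n}]F(X,Y)$, since a monomial $X^a Y^b$ is sent to $X^a Y^{a+b}$. Applying this to the $k$-th summand, with $F(X,Y) = \bigl(1 - P'_Y(X,Y)\bigr)P(X,Y)^k$ and $c = n-1+k$, reduces $[X^nY^n]$ of the $k$-th term to $[X^n Y^{k-1}]\bigl(1 - P'_Y\bigr)P^k$. The $k=0$ term contributes nothing because $1-P'_Y$ is a polynomial in $Y$ and so has no $Y^{-1}$ coefficient; re-indexing by $m=k\geq 1$ produces exactly $f_n = \sum_{m\geq 1}[X^n Y^{m-1}]\bigl(1-P'_Y(X,Y)\bigr)P^m(X,Y)$. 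I expect the main obstacle to be purely a matter of careful formal justification rather than a genuine difficulty: one must confirm that diagonal extraction commutes with the infinite sum, which holds because for each fixed $n$ only finitely many values of $k$ contribute to $[X^nY^n]G$ (a bound on $k$ in terms of $n$ follows since every monomial of $P$ has positive total degree while those free of $X$ carry $Y$-degree at least two), and one must apply the substitution-and-extraction step consistently throughout.
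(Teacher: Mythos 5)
Your proof is correct and follows essentially the same route as the paper's: apply Proposition~\ref{furst} to $Q(X,Y)=Y-P(X,Y)$ (the paper uses $P-Y$, an immaterial sign change), expand the resulting rational function as a geometric series in $P(XY,Y)/Y$, and extract the diagonal term by term, discarding the $k=0$ contribution. Your version is slightly more explicit about the coefficient bookkeeping under $X\mapsto XY$ and about why only finitely many terms contribute to each $f_n$, but the argument is the same.
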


Here we prove that a generalization of the formula is a simple consequence of Proposition \ref{furst}

\begin{theorem}\label{main}
Let $P(X,Y)$ be a polynomial over a field $\kappa$ such that $P(0,0)=0$ and $P'_Y(0,0)=0$. 
The coefficients of the algebraic series $f(X)=\sum _{n\geq 1} f_n X^n$, defined implicitly by $f(X)=P(X,f(X))$, 
have the form of an infinite sum  $$f_n = \sum _{m\geq 1} [X^n Y^{m-1}](1-P'_Y(X,Y))P^m(X,Y).$$
\end{theorem}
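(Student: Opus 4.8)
The plan is to reduce the statement directly to Furstenberg's Proposition \ref{furst}. First I would introduce the polynomial $Q(X,Y) := Y - P(X,Y)$. By hypothesis $f$ satisfies $f(X) = P(X,f(X))$, which is exactly $Q(X,f(X)) = 0$, so $f$ is a root of $Q$ in $\kappa((X))$ with zero constant term. The crucial point making the proposition applicable is that $\frac{\partial Q}{\partial Y}(0,0) = 1 - P'_Y(0,0) = 1 \neq 0$, which holds precisely because of the assumption $P'_Y(0,0) = 0$. (One should also note that $P(0,0) = 0$ together with $P'_Y(0,0) = 0$ makes $P(0,Y)$ have order $\geq 2$ in $Y$, so the implicit equation has a unique solution with zero constant term, matching the hypothesis $\varphi = \sum_{1}^{\infty} c_n X^n$ of the proposition.)

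Applying Proposition \ref{furst} to $Q$ then gives
$$f(X) = \mathscr{D}\left\{ \frac{Y^2 \left(1 - P'_Y(XY, Y)\right)}{Y - P(XY, Y)} \right\}.$$
The next step is to massage the rational function inside the diagonal into a form from which the coefficient $[X^n Y^n]$ can be read off. Because $P$ has no constant term and no pure $Y^1$ term, the polynomial $P(XY,Y)$ is divisible by $Y$, and I would set $u := P(XY,Y)/Y$, which has strictly positive total order in $(X,Y)$; here again the vanishing of $P'_Y(0,0)$ is exactly what removes the would-be constant term of $u$. Writing $Y - P(XY,Y) = Y(1-u)$ and expanding $1/(1-u) = \sum_{m \geq 0} u^m$ as a formal power series, the expression inside the diagonal becomes
$$\sum_{m \geq 0} Y^{1-m}\,\bigl(1 - P'_Y(XY, Y)\bigr)\,P(XY, Y)^m.$$

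The final step is the coefficient extraction. The diagonal picks out $f_n = [X^n Y^n]$ of the series above. For a polynomial $H(X,Y) = \sum_{a,b} h_{a,b} X^a Y^b$, the substitution $X \mapsto XY$ sends $h_{a,b} X^a Y^b$ to $h_{a,b} X^a Y^{a+b}$, so in $Y^{1-m} H(XY,Y)$ the monomial $X^n Y^n$ arises exactly from $a = n$, $b = m-1$. Applying this with $H = (1 - P'_Y)P^m$ gives, term by term,
$$[X^n Y^n]\,Y^{1-m}\bigl(1 - P'_Y(XY,Y)\bigr)P(XY,Y)^m = [X^n Y^{m-1}]\bigl(1 - P'_Y(X,Y)\bigr)P(X,Y)^m,$$
and summing over $m$ yields the claimed formula; the $m = 0$ term contributes $[X^n Y^{-1}](1 - P'_Y)$, which vanishes since $1 - P'_Y$ is a polynomial, so the sum may start at $m = 1$. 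The main obstacle is not any single computation but the careful verification of the formal-power-series manipulations: confirming that $u$ has positive order so that the geometric expansion is legitimate, and tracking the shift in $Y$-exponents through the substitution $X \mapsto XY$ so that the diagonal coefficient matches $[X^n Y^{m-1}]$ exactly.
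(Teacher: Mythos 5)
Your proposal is correct and follows essentially the same route as the paper: define $Q = \pm(Y - P(X,Y))$ so that $Q'_Y(0,0) \neq 0$, apply Furstenberg's Proposition~\ref{furst}, expand $1/(1-P(XY,Y)/Y)$ as a geometric series (valid because $P(0,0)=0$ and $P'_Y(0,0)=0$ kill the constant term), and read off the diagonal coefficient via the exponent shift $X^aY^b \mapsto X^aY^{a+b}$. Your handling of the $m=0$ term (it would require $Y^{-1}$ in a polynomial) is the same observation the paper makes by noting that every monomial of $Y(1-P'_Y(XY,Y))$ has $Y$-degree exceeding its $X$-degree.
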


\begin{proof}
Let the polynomial $Q(X,Y)$ be defined as $Q(X,Y)=P(X,Y)-Y$, then $Q'_Y(0,0)=P'_Y(0,0)-1\neq 0$, and $Q(X, f(X))=P(X,f(X))-f(X)=0$. 
According to Proposition \ref{furst}, 
\begin{align*}f &= \mathscr{D}\{Y^2 \frac{\partial Q}{\partial Y}(XY,Y)/Q(XY,Y) \} 
\\ &=\mathscr{D}\{Y ^2 (\frac{\partial P}{\partial Y}(XY,Y)-1 )/(P(XY,Y)-Y) \}
\\&=\mathscr{D}\{Y (1-\frac{\partial P}{\partial Y}(XY,Y) )/(1-\frac{P(XY,Y)}{Y}) \}
\\&=\mathscr{D}\{Y (1-\frac{\partial P}{\partial Y}(XY,Y) )(1+\sum _{m\geq 1} (\frac{P(XY,Y)}{Y})^m) \}.
\end{align*} 
We have the last equality due to the fact that $P'_Y(0,0)=0$, $\frac{P(XY,Y)}{Y}$ has no constant term, 
and therefore $1/(1-\frac{P(XY,Y)}{Y})=1+\sum_{m\geq 1}(\frac{P(XY,Y)}{Y})^m$.
\\As in each term of $Y(1-\frac{\partial P}{\partial Y}(XY,Y))$ the power of $Y$ is larger than that of $X$, 
it cannot contribute to the diagonal.
Therefore, 

\begin{align*}
f_n & =[X^n Y^n] Y (1-\frac{\partial P}{\partial Y}(XY,Y) )(1+\sum_{m\geq 1}(\frac{P(XY,Y)}{Y})^m)
\\ &=[X^n Y^n] Y (1-\frac{\partial P}{\partial Y}(XY,Y) )(\sum_{m\geq 1}(\frac{P(XY,Y)}{Y})^m)
\\ &=\sum_{m\geq 1}[X^n Y^{m-1}] (1-\frac{\partial P(X,Y)}{\partial Y})P(X,Y)^m.
\end{align*}

\end{proof}

\section{$p$-kernel of a rational fraction and Christol's Theorem}

The following theorem is a generalization of Christol's Theorem in two variables:

\begin{theorem}\emph{(Salon \cite{salon})}
\label{salon}
Let $\mathbb{F}_q$ be a finite field of characteristic $p$. A formal power series
$$f=\sum_{(n_1,...,n_m)\in \mathbb{N}^m} u_{n_1,...,n_m} X_1^{n_1}\cdots X_m^{n_m}\in \mathbb{F}_q[[X_1,...,X_m]] $$
is algebraic over the rational function field $\mathbb{F}_q(X_1,...,X_m)$ if and only if
the $p$-kernel of $u$ $$\{ (u_{n_1\cdot p^k+r_1,..., n_m\cdot p^k+r_n})_{n_1,...,n_m}\;|\; k\in \mathbb{N}, r_i=0,1,...,p^k-1 \}$$ is finite.
\end{theorem}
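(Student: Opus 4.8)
The plan is to prove both implications by translating the $p$-kernel condition into an orbit-finiteness statement for the Cartier operators and then relating that orbit to algebraicity, following the template of the one-variable case (Theorem \ref{christol}); the passage to $m$ variables is largely a matter of replacing indices by multi-indices. Write $q = p^a$ for the cardinality of $\mathbb{F}_q$ and, for a multi-index $r \in \{0,\ldots,q-1\}^m$, define the decimation operator $\Lambda_r$ on $\mathbb{F}_q[[X_1,\ldots,X_m]]$ by $\Lambda_r\big(\sum_n u_n X^n\big) = \sum_n u_{qn+r}\,X^n$, where $n$ and $r$ are multi-indices and $X^n = X_1^{n_1}\cdots X_m^{n_m}$. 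First I would record the standard reduction that finiteness of the $p$-kernel is equivalent to finiteness of the $q$-kernel, and that the latter is precisely the assertion that the orbit
\[
\mathcal{O}(f) = \{\,\Lambda_{r_k}\cdots\Lambda_{r_1} f : k \ge 0,\ r_i \in \{0,\ldots,q-1\}^m\,\}
\]
is a finite set. Two identities, both resting on the fact that the $q$-power map is the identity on the coefficient field $\mathbb{F}_q$, will do the rest: the reconstruction identity $g = \sum_{r} X^r (\Lambda_r g)^q$ and the semilinearity $\Lambda_r(h^q g) = h\,\Lambda_r(g)$.

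For the implication ``finite $p$-kernel $\Rightarrow$ algebraic'', I would write $\mathcal{O}(f) = \{g_1,\ldots,g_N\}$ with $g_1 = f$. Since each $\Lambda_r$ permutes $\mathcal{O}(f)$ into itself, the reconstruction identity becomes a finite Frobenius-linear system $g_i = \sum_{j=1}^N M_{ij}(X)\, g_j^{\,q}$ with $M_{ij} \in \mathbb{F}_q[X_1,\ldots,X_m]$. Setting $F = \mathbb{F}_q(X_1,\ldots,X_m)$ and $L = F(g_1,\ldots,g_N)$, these relations show that every $g_i$ already lies in $F(g_1^{\,q},\ldots,g_N^{\,q})$, whence $L = F\cdot L^q$. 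Because $L$ is finitely generated over the perfect field $\mathbb{F}_q$, one has $[L:L^q] = q^{\operatorname{tr.deg}_{\mathbb{F}_q} L}$; combining this with $[F:F^q] = q^m$, the compositum estimate $[L:L^q] = [FL^q:L^q] \le [F:F^q]$, and the obvious bound $\operatorname{tr.deg}_{\mathbb{F}_q} L \ge m$, forces $\operatorname{tr.deg}_{\mathbb{F}_q} L = m$. Then $L/F$ has transcendence degree $0$ and, being finitely generated, is a finite extension, so $f = g_1$ is algebraic over $F$.

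For the converse, algebraicity gives $[F(f):F] < \infty$, so the powers $f, f^{q}, f^{q^2},\ldots$ are linearly dependent over $F$; clearing denominators yields a relation $\sum_{i=0}^s A_i\, f^{q^i} = 0$ with $A_i \in \mathbb{F}_q[X_1,\ldots,X_m]$ and $A_0 \neq 0$. Using the semilinearity to strip off the $q$-th powers appearing in this relation, I would exhibit an explicit finite collection of rational functions --- numerators of uniformly bounded degree over a fixed denominator manufactured from the $A_i$ --- that contains $f$ and is stable under every $\Lambda_r$. Since $\mathbb{F}_q$ is finite, there are only finitely many admissible numerators, so this collection is a finite set, and hence $\mathcal{O}(f)$ is finite, i.e. the $p$-kernel is finite.

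The step I expect to be the main obstacle is this last degree control. The operator $\Lambda_r$ does not commute with multiplication by $A_0$, and the defining relation only lets one pull out $q$-th powers; keeping the numerator degrees bounded under repeated application of the $\Lambda_r$, uniformly across all $m$ variables and all multi-indices $r$, is the genuinely technical point and the part where the multivariate setting demands more care than the one-variable argument behind Theorem \ref{christol}. The remaining ingredients --- the $p$-versus-$q$ kernel reduction and the two operator identities --- are routine once the conventions are fixed.
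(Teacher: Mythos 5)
The paper does not prove this theorem: it is quoted from Salon \cite{salon} and used as a black box, so there is no in-paper argument to compare against. Your outline is the standard route (essentially Salon's multivariate adaptation of the Christol--Kamae--Mend\`es France--Rauzy proof), and the direction ``finite kernel $\Rightarrow$ algebraic'' is correct and essentially complete: the reconstruction identity gives $L=F\cdot L^q$, and the degree count $[L:L^q]=q^{\operatorname{tr.deg}_{\mathbb{F}_q}L}\le [F:F^q]=q^m$ forces $L/F$ to be algebraic, hence finite since $L$ is finitely generated. The standard facts you invoke ($[L:L^q]=q^{\operatorname{tr.deg}}$ for finitely generated extensions of a perfect field, and the compositum bound) are used correctly.

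The converse direction has two gaps. First, the assertion that linear dependence of $f,f^q,f^{q^2},\dots$ over $F$ yields a relation $\sum_{i=0}^{s}A_i f^{q^i}=0$ with $A_0\neq 0$ does not follow from dependence alone: the lowest-index nonzero coefficient could sit at some $j\ge 1$. The repair is standard but must be said: take $j$ minimal over all such relations; if $j\ge 1$, choose $r$ with $\Lambda_r(A_j)\neq 0$ (possible by the reconstruction identity applied to $A_j$) and apply $\Lambda_r$ together with the semilinearity $\Lambda_r(h^qg)=h\Lambda_r(g)$ to get $\sum_{i\ge j}\Lambda_r(A_i)f^{q^{i-1}}=0$, a relation starting at $j-1$ with nonzero leading coefficient, contradicting minimality. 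Second, the degree control you flag as ``the main obstacle'' is genuinely the crux and is left unexecuted; without it the claimed finite stable set is not established. It is, however, exactly the computation the paper carries out in the proof of Proposition \ref{bun}: writing $\Lambda_r(C/A_0)=\Lambda_r(CA_0^{q-1}/A_0^{q})=\Lambda_r(CA_0^{q-1})/A_0$ and using $\deg\Lambda_r(CA_0^{q-1})\le(\deg C+(q-1)\deg A_0)/q<\deg C$ whenever $\deg C\ge\deg A_0$ shows the numerators stay below a fixed bound; applied to the vector of numerators $(C_0,\dots,C_{s-1})$ in $\frac{1}{A_0}\sum_i C_i f^{q^i}$ (after substituting the Ore relation to remove the $i=0$ term before decimating), this closes the argument. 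Note also that the paper itself obtains this direction (in one variable) by a genuinely different route, namely Theorem \ref{furst2} plus Proposition \ref{bun}: represent $f$ as the diagonal of a rational function and bound degrees there instead, which trades the Ore-relation bookkeeping for Furstenberg's diagonal representation.
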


This theorem, whose proof does not use Theorem \ref{furst1}, gives another proof of the latter:

\begin{proof}[Proof of Theorem \ref{furst1}]
Let $\mathbb{F}_q$ be a finite field of characteristic $p$. A formal power series in $\mathbb{F}_q[[X_1,X_m]]$
$$f(X_1,...,X_m)=\sum_{n_1,...,n_m} u_{n_1,...,n_m} X_1 ^{n_1} \cdots X_m ^{n_m}$$
that represents a rational functions of X and Y is algebraic over $\mathbb{F}_q(X_1,...,X_m)$. By Theorem \ref{salon}, the $p$-kernel
of the sequence $(u_{n_1,...,n_m})_{n_1,...,n_m}$ is finite. This means that the $p$-kernel of its diagonal $(u_{n,...,n})_n$ is finite. 
Thus, by Theorem \ref{christol}, the power series $\sum_n u_{n,...,n} X^n$ in $\mathbb{F}_q[[X]]$ is algebraic over $\mathbb{F}_q(X)$.
\end{proof}

On the other hand, by Theorem \ref{furst2} and a direct examination of the $p$-kernel, we can re-prove one direction of 
Christol's Theorem. It should be noted that Christol \cite{christol} gave a similar proof for characteristic sequences.

\begin{prop}
\label{bun}
For $P(X,Y), Q(X,Y)$ polynomials in $\mathbb{F}_q[X,Y]$, where $Q(X,Y)$ has a non-zero constant term, 
the $p$-kernel of the coefficient sequence of the formal power series $\frac{P(X,Y)}{Q(X,Y)}$ is finite.
\end{prop}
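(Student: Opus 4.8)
The plan is to track how the coefficient array of $P/Q$ transforms under the elementary $p$-decimation operators and to show that, after finitely many applications, only finitely many distinct arrays can occur. For residues $0\le r_1,r_2<p$, I write $\Lambda_{r_1,r_2}$ for the operator sending $\sum_{m,n} u_{m,n}X^mY^n$ to $\sum_{m,n}u_{pm+r_1,\,pn+r_2}X^mY^n$; the $p$-kernel is exactly the set of arrays obtained from $P/Q$ by finite compositions of these operators, so it suffices to bound the reachable arrays. (I normalize $Q(0,0)=1$ so that $P/Q$ is a genuine element of $\mathbb{F}_q[[X,Y]]$.)

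The engine of the argument is the characteristic-$p$ identity. Let $\sigma\colon a\mapsto a^p$ act on coefficients and write $Q^\sigma$ for the polynomial obtained by applying $\sigma$ to each coefficient of $Q$. Since $Q(X,Y)^p=Q^\sigma(X^p,Y^p)$, I can write
$$\frac{1}{Q}=\frac{Q^{p-1}}{Q^p}=\frac{Q^{p-1}}{Q^\sigma(X^p,Y^p)},\qquad\text{so}\qquad \frac{P}{Q}=\frac{N}{Q^\sigma(X^p,Y^p)},\quad N:=PQ^{p-1}.$$
Combined with the commutation rule $\Lambda_{r_1,r_2}\bigl(h\cdot g(X^p,Y^p)\bigr)=\Lambda_{r_1,r_2}(h)\cdot g(X,Y)$, valid for any series $h$ and $g$, this yields $\Lambda_{r_1,r_2}(P/Q)=\Lambda_{r_1,r_2}(N)/Q^\sigma$, where $\Lambda_{r_1,r_2}(N)$ is again a polynomial — a section of $N$ whose partial degrees are at most $\lfloor \deg_X N/p\rfloor$ and $\lfloor \deg_Y N/p\rfloor$. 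Thus one decimation step replaces the pair (numerator, denominator) $=(P,Q)$ by $(\text{section of }PQ^{p-1},\,Q^\sigma)$, and iterating shows that every element of the $p$-kernel is represented as $R/Q^{\sigma^i}$ with $R$ a polynomial.

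Two finiteness phenomena then close the argument. First, since $\sigma^s=\mathrm{id}$ for $q=p^s$, the denominators range only over the set $\{Q,Q^\sigma,\dots,Q^{\sigma^{s-1}}\}$, all of the same partial degrees as $Q$. Second, the partial-degree recursion for the numerator, $\delta\mapsto\lfloor(\delta+(p-1)\deg_X Q)/p\rfloor$, is a contraction with fixed point $\deg_X Q$; hence after finitely many steps $\deg_X R\le\deg_X Q$ and $\deg_Y R\le\deg_Y Q$, and I will check that this bounded-degree region is invariant under all further decimations.

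The decisive step — and the one place where the hypothesis that the field is finite is indispensable — is the final count: over $\mathbb{F}_q$ there are only finitely many polynomials of bounded partial degree, so only finitely many numerators $R$ and finitely many denominators $Q^{\sigma^i}$ can occur once the degrees have stabilized. The finitely many arrays produced during the initial, pre-stabilization steps contribute only finitely many more, so the $p$-kernel is finite. The main obstacle I expect is making the degree-stabilization bookkeeping precise — in particular verifying that the bounded region is genuinely invariant and that the floor functions do not spoil the contraction — because a merely finite-dimensional span of arrays would \emph{not} by itself yield a finite kernel; finiteness truly comes from combining a bounded degree bound with the finiteness of $\mathbb{F}_q$.
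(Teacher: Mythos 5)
Your proof is correct and follows essentially the same route as the paper: write $1/Q$ as $Q^{p-1}/Q^{p}$, commute the Cartier/decimation operators past the $p$-th power in the denominator, observe that the numerators stay within a set of polynomials of bounded degree, and conclude by the finiteness of $\mathbb{F}_q$. The only cosmetic difference is that you decimate by $p$ and therefore must track the Frobenius twist $Q\mapsto Q^{\sigma}$ on the denominator (harmless, since $\sigma^{s}=\mathrm{id}$), whereas the paper decimates by $q$ so the denominator is literally fixed, and then invokes the equivalence of $p$-kernel and $q$-kernel finiteness at the end.
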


\newtheorem{coro}{Corollary}
\begin{coro}
 If the formal power series
$$f=\sum_{n=0}^{\infty} u_n X^n\in \mathbb{F}_q[[X]] $$
is algebraic over the rational function field $\mathbb{F}_q(X)$, the $p$-kernel of $u$ $$\{ (u_{n\cdot p^k+r})_n\;|\; k\in \mathbb{N}, r=0,1,...,p^k-1 \}$$ is finite.
\end{coro}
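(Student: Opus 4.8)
The plan is to combine Furstenberg's Theorem \ref{furst2} with Proposition \ref{bun}, thereby reducing the one-variable statement about the $p$-kernel of $u$ to the two-variable statement about the $p$-kernel of a rational fraction. Since $f$ is algebraic over $\mathbb{F}_q(X)$ and $f \in \mathbb{F}_q[[X]] \subset \mathbb{F}_q((X))$, Theorem \ref{furst2} supplies a rational function $R(X,Y) = P(X,Y)/Q(X,Y)$ with $P,Q \in \mathbb{F}_q[X,Y]$, representing a formal power series in two variables, such that $f = \mathscr{D}(R)$. Writing the expansion $R = \sum_{m,n} a_{m,n} X^m Y^n$, the definition of the diagonal gives $u_n = a_{n,n}$ for every $n$.

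Next I would apply Proposition \ref{bun} to the double sequence $(a_{m,n})_{m,n}$. Because $R$ represents a power series, we may take $Q$ with $Q(0,0) \neq 0$, so the hypothesis of Proposition \ref{bun} is met, and we conclude that the two-dimensional $p$-kernel $\{(a_{m p^k + r,\, n p^k + s})_{m,n} \mid k \in \mathbb{N},\ 0 \le r,s \le p^k - 1\}$ is a finite set of double sequences.

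The crucial step is to exhibit each element of the one-dimensional $p$-kernel of $u$ as the diagonal of an element of this finite two-dimensional $p$-kernel. Fix $k$ and $r$ with $0 \le r \le p^k - 1$; then $u_{n p^k + r} = a_{n p^k + r,\, n p^k + r}$, so the sequence $(u_{n p^k + r})_n$ is obtained from the double sequence $(a_{m p^k + r,\, n p^k + r})_{m,n}$ — the member of the two-dimensional $p$-kernel corresponding to the choice $s = r$ — by restricting to the diagonal $m = n$. Since passing to the diagonal is a well-defined map from double sequences to single sequences, and the two-dimensional $p$-kernel is finite, there are only finitely many resulting diagonals. Hence the one-dimensional $p$-kernel of $u$, being contained in this finite collection, is itself finite, which is exactly the claim.

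The main obstacle I anticipate is essentially bookkeeping with the indices: I must verify that the entry $u_{n p^k + r}$ really is the diagonal entry $a_{n p^k + r,\, n p^k + r}$ of the \emph{correct} kernel element, namely the one with $s = r$ and with the \emph{same} power $p^k$ scaling both coordinates, so that Salon's definition of the two-dimensional $p$-kernel (common $p^k$ in both indices) is respected. I should also confirm at the outset that the rational function furnished by Theorem \ref{furst2} may be taken with $Q(0,0) \neq 0$, so that Proposition \ref{bun} genuinely applies. Once these points are checked, the corollary is a direct specialization of the two-variable result to its diagonal.
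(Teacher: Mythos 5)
Your proposal is correct and follows exactly the paper's own argument: invoke Theorem \ref{furst2} to realize $f$ as the diagonal of a rational power series $R(X,Y)$, apply Proposition \ref{bun} to get finiteness of the two-dimensional $p$-kernel, and observe that each member of the $p$-kernel of $u$ is the diagonal of a member of that finite set. Your index bookkeeping (matching $s=r$ and the common scale $p^k$) just makes explicit the step the paper compresses into ``therefore the $p$-kernel of its diagonal $f$ is finite.''
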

\begin{proof}
  If the formal power series $f$ 
is algebraic over the rational function field $\mathbb{F}_q(X)$, by Theorem \ref{furst2}, $f$ is the diagonal of a rational function 
$R(X,Y)$ in $\mathbb{F}_q[[X,Y]]$. By Proposition \ref{bun}, the $p$-kernel of $R(X,Y)$ is finite, and therefore the $p$-kernel of its 
diagonal $f$ is finite.
\end{proof}

Before proving Proposition \ref{bun} we first recall an easy lemma:

\newtheorem{lem}{Lemma}
\begin{lem}
 For an element $A$ in $\mathbb{F}_q((X,Y))$, 
$ A=\sum_{m,n}a_{m,n}X^mY^n$, we let $\Lambda_{r,s}(A)$ denote the formal power series $\sum_{m,n}a_{mq+r,nq+s}X^mY^n$. Note that $\Lambda_{r,s}$ is sometimes called
a Cartier operator.
For $A,B \in \mathbb{F}_q((X,Y)) $, we have $\Lambda_{r,s}(A^q B)=A\Lambda_{r,s}(B)$
\end{lem}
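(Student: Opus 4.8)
The plan is to exploit the fact that raising to the $q$-th power is a Frobenius endomorphism, which is additive in characteristic $p$, and then reduce the identity to the case where $A$ is a single monomial, where it becomes a bookkeeping check on exponents.

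First I would record the effect of the $q$-th power on coefficients. Since $q=p^e$ for some $e$ and $\mathbb{F}_q$ has characteristic $p$, the freshman's dream gives $\left(\sum_{m,n} a_{m,n} X^m Y^n\right)^q = \sum_{m,n} a_{m,n}^q X^{mq} Y^{nq}$, and because every $a \in \mathbb{F}_q$ satisfies $a^q = a$, this simplifies to $A^q = \sum_{m,n} a_{m,n} X^{mq} Y^{nq}$. Thus taking $q$-th powers multiplies all exponents by $q$ while fixing the coefficients. Next I would reduce to monomials: both sides of the claimed identity are additive in $A$, since on the left $(A_1+A_2)^q = A_1^q + A_2^q$ by Frobenius additivity and $\Lambda_{r,s}$ is additive by definition, while the right-hand side is manifestly additive in $A$. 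As scalars $c\in\mathbb{F}_q$ also pull through because $c^q=c$, it suffices to verify the identity for $A = X^i Y^j$ a monomial.

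Finally, writing $B = \sum_{k,l} b_{k,l} X^k Y^l$, I would compute both sides coefficientwise. On one hand $A^q B = \sum_{k,l} b_{k,l} X^{k+iq} Y^{l+jq}$, so by the definition of $\Lambda_{r,s}$ the coefficient of $X^m Y^n$ in $\Lambda_{r,s}(A^q B)$ is $b_{(m-i)q+r,\,(n-j)q+s}$. On the other hand $\Lambda_{r,s}(B) = \sum_{m',n'} b_{m'q+r,\,n'q+s} X^{m'} Y^{n'}$, so the coefficient of $X^m Y^n$ in $X^i Y^j \Lambda_{r,s}(B)$, obtained by setting $m'=m-i$ and $n'=n-j$, is again $b_{(m-i)q+r,\,(n-j)q+s}$. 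The two agree, which proves the lemma.

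The computation is entirely routine, and there is no genuine obstacle; the only points requiring care are keeping the index bookkeeping straight in the monomial case and noting that the simplification $A^q = \sum a_{m,n} X^{mq} Y^{nq}$ relies on both the characteristic-$p$ freshman's dream and the identity $a^q=a$ in $\mathbb{F}_q$. It is precisely the latter that makes $\Lambda_{r,s}$ behave semilinearly with respect to Frobenius, so that $A$ (rather than $A^q$) emerges on the right-hand side.
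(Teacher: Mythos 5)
Your proof is correct and follows essentially the same route as the paper's: both hinge on the observation that $A^q=\sum a_{m,n}X^{mq}Y^{nq}$ (Frobenius plus $a^q=a$ in $\mathbb{F}_q$) followed by a direct bookkeeping of exponents; your reduction to monomials by additivity is just a repackaging of the convolution sum the paper writes out explicitly.
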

\begin{proof}
 As all the coefficients are in $\mathbb{F}_q$, we have $$ (\sum_{m,n}  a_{m,n} x^m y^n)^q 
 = \sum _{m,n} a^q_{m,n} x^{mq} y^{nq} =\sum _{m,n} a_{m,n} x^{mq} y^{nq}$$

 \begin{align*} [x^m y^n]\Lambda_{r,s}(A^q B) &= [x^{mq+r} y^{nq+s}](A^q B)\\
&=\sum\limits_{\substack{c_1+c_2=mq+r \\c_3+c_4 =nq+s}} [x^{c_1} y^{c_3}]A^q \cdot [x^{c_2} y^{c_4}] B \\
&=\sum\limits_{\substack{q c'_1+c_2=mq+r \\qc'_3+c_4 =nq+s}} a_{c'_1 c'_3} \cdot b_{c_2 c_4}\\
&=\sum\limits_{\substack{c'_1+c'_2=m \\c'_3+c'_4 =n}} a_{c'_1 c'_3} \cdot b_{c'_2 q +r, c'_4 q + s}\\
&=[x^m y^n](A \Lambda_{r,s}(B)).
\end{align*}
\end{proof}

\begin{proof}[Proof of Proposition \ref{bun}]
We let $\mathbf{\Lambda}$ denote the set of operators $\{ \Lambda_{r,s} \;|\;r,s\in\{0,1,...,q-1\}$.
The $q$-kernel of $\frac{P}{Q}$ is generated by $\frac{P}{Q}$ 
and the operations in $\mathbf{\Lambda}$. 
\\Let $\Lambda_0$ be an element in $\mathbf{\Lambda}$, then by the previous lemma,
$$\Lambda_0(\frac{P}{Q})=\Lambda_0(\frac{PQ^{q-1}}{Q^p})=\frac{\Lambda_0(PQ^{p-1})}{Q}.$$
If we let $a$ and $b$ denote the degree of $P$ and $Q$, then 
$$\textnormal{deg}(\Lambda_0(PQ^{q-1})) \leq \frac{a+b(q-1)}{q} < a+b.$$
Let $P_1$ denote $\Lambda_0(PQ^{q-1})$. Take another element $\Lambda_1$ in $\mathbf{\Lambda}$, then
$$\Lambda_1(\Lambda_0(\frac{P}{Q}))=\Lambda_1(\frac{P_1}{Q})=\frac{\Lambda_1(P_1Q^{q-1})}{Q}$$
$$\textnormal{deg}(\Lambda_1(P_1Q^{q-1}))\leq(\textnormal{deg}(P_1)+\textnormal{deg}(Q)\cdot (q-1))/q < (a+b+b\cdot (q-1))/q < a+b.$$
By induction, after applying the elements of $\mathbf{\Lambda}$ to $\frac{P}{Q}$, we always get an element in the set
$$\{\frac{R}{Q}\;|\; R\in \mathbb{F}_q(X,Y),\; \textnormal{deg}(R)<a+b \}.$$
As the field is finite, the number of polynomials of degree less than $a+b$ is finite and so is the $q$-kernel. 
This ends the proof as the $p-$kernel of a sequence is finite if and only if its $q-$kernel is finite.

\end{proof}

\section{Miscellaneous}
\subsection{Annihilating polynomial of algebraic functions}
Theorem \ref{furst2} states that if a series $\phi(X)=\sum\limits_{n\geq 0} u_n X^n \in \mathbb{F}_q[[X]]$ is algebraic over $\mathbb{F}_q(X)$,
then it is the diagonal of a rational function
$R(X,Y)\in \mathbb{F}_q(X,Y)$. The same result is proved by Fagnot \cite{fagnot} using combinatorial methods. 

From the proof of Proposition 1 and Theorem 2 in \cite{fu} we know that $R(X,Y)$ can be constructed from a relation of the form 
\begin{equation} A_0(X)\phi^{q^l}(X)+A_1(X)\phi^{q^{l+1}}(X)+ \cdots + A_n(X)\phi^{q^{n+l}}(x)=0. \label{relation} \end{equation}

We now show how to obtain such a relation for a given algebraic series $\phi(X)=\sum\limits_{n\geq 0} u_n X^n \in \mathbb{F}_q[[X]]$.
Suppose that the sequence $(u_n)_n$ is given. By this we mean that we know either an automaton that generates $(u_n)_n$ or the $q$-kernel of $(u_n)_n$ with initial conditions. 
For the definition of automata and its link with algebraic series see \cite{automate}.

Let 
 $E=\{ u^1,u^2,...,u^d\}$ with $u^1=u$ be the $q$-kernel of $(u_n)_n$. $E$ is stable by the maps:
$$(u^i_{n})_n \rightarrow (u^i_{qn+r})_n\;\; \textnormal{for}\;\; r=0,1,...,p-1 .$$ 
That is to say, there exists a map $f$ from $\{1,2,...,d\}\times \{0,1,...,p-1\}$ to $ \{1,2,...,d\}$ such that
$$(u^i_{nq+r})_n=(u^{f(i,r)}_{n})_n$$
Define the matrix $A(X)$ in $\mathbb{F}_q[X]^{d\times d}$, where
$$
A_{i,j}(X) = \sum_{\{r|f(i,r)=j\}} X^r.
$$
Define the formal power series $G_1,...,G_d$ by $$G_i(X)=\sum_{n=0}^{\infty}u^i_{n}X^n .$$
The series can be written as \begin{align*}G_i(X)&=\sum_{r=0}^{q-1}\sum_{m=0}^{\infty} u^i_{qm+r}X^{qm+r}
\\&=\sum_{r=0}^{q-1}X^r\sum_{m=0}^{\infty}u^i_{qm+r}X^{qm},
\\&=\sum_{j}A_{i,j}(X)\sum_{m=0}^{\infty}u^j_{m}X^{qm},
\\&=\sum_{j}A_{i,j}(X) G^j(X^q).
\end{align*}

Writing the equalities in matrix form and using the fact that $P(X^{q^k})=P(X)^{q^k}$ for $P(X)$ in $\mathbb{F}_q[X]$ we have:
$$\begin{pmatrix}
   G_1(X)
   \\ G_2(X)
   \\ \vdots
   \\ G_d(X)
  \end{pmatrix}=
  A(X)\begin{pmatrix}
   G_1(X)^q
   \\ G_2(X)^q
   \\ \vdots
   \\ G_d(X)^q
  \end{pmatrix}.$$

More generally, 
$$\begin{pmatrix}
   G_1(X)^{q^k}
   \\ G_2(X)^{q^k}
   \\ \vdots
   \\ G_d(X)^{q^k}
  \end{pmatrix}=
  A(X^{p^k})\begin{pmatrix}
   G_1(X)^{q^{k+1}}
   \\ G_2(X)^{q^{k+1}}
   \\ \vdots
   \\ G_d(X)^{q^{k+1}}
  \end{pmatrix}.$$
 
 And therefore for $k\geq 1$
 $$\begin{pmatrix}
   G_1(X)
   \\ G_2(X)
   \\ \vdots
   \\ G_d(X)
  \end{pmatrix}
  =\prod_{i=0}^{k-1}A(X^{q^i})
  \begin{pmatrix}
   
   G_1(X)^{q^k}
   \\ G_2(X)^{q^k}
   \\ \vdots
   \\ G_d(X)^{q^k}
  \end{pmatrix}.$$
 Denoting the $i$-th row of a matrix $M$ by $M_i$, we have
  $$\begin{pmatrix}
   G_1(X)
   \\ G_1(X)^q
   \\ \vdots
   \\ G_1(X)^{q^d}
  \end{pmatrix}
  =
  \begin{pmatrix}
   (\prod_{i=0}^{d}A(X^{q^i}))_1
   \\(\prod_{i=1}^{d}A(X^{q^i}))_1
   \\ \vdots
   \\ (\prod_{i=d}^{d}A(X^{q^i}))_1
  \end{pmatrix} 
  \begin{pmatrix}
   
   G_1(X)^{q^{d+1}}
   \\ G_2(X)^{q^{d+1}}
   \\ \vdots
   \\ G_d(X)^{q^{d+1}}
  \end{pmatrix}.$$
  This equality is of the form 
   $$\begin{pmatrix}
   G_1(X)
   \\ G_1(X)^q
   \\ \vdots
   \\ G_1(X)^{q^{d}}
  \end{pmatrix}
  =B(X) \cdot
  \begin{pmatrix}
   
   G_1(X)^{q^{d+1}}
   \\ G_2(X)^{q^{d+1}}
   \\ \vdots
   \\ G_d(X)^{q^{d+1}}
  \end{pmatrix},$$
  where $B(X)$ is a matrix in $\mathbb{F}_q(X)^{(d+1)\times d}$. There exists a linear combination of the $d+1$ rows of $B(X)$ that
  is equal to $0$. This means that the linear combination of $G_1(X),...,G_1(X)^{q^d}$ with the same coefficients is $0$, which is a relation
  of the form \eqref{relation}.
 
\subsection{Formal power series solutions of polynomials}
The main result of this article (Theorem \ref{main}) allows us to calculate the coefficients of an algebraic series using an annihilating polynomial
of it, when the latter satisfies certain conditions. For a formal power series in $\mathbb{F}_q[[X]]$ that is algebraic over $\mathbb{F}_q(X)$, we can
calculate its coefficients in a simpler way by knowing an automaton that generates them. 
We have shown how to find an annihilating polynomial of an algebraic function in the previous subsection. 
Now we start from a polynomial in $\mathbb{F}_q (X)[Y]$ and show how to find automata that generate its roots in $\mathbb{F}[[X]]$ with typical examples.
  
\theoremstyle{definition}
\newtheorem{example}{Example}
\begin{example}
$P(X,Y)=(1+X)^3 Y^2+(1+X)^2 Y + X \in \mathbb{F}_2[X,Y]$. There exists $f(X)\in \mathbb{F}_2[X]$ such that $P(X,f(X))=0$. This is because  $P(X, \sum \limits_{n\geq 0} a_n X^n)=0$
if and only if $(a_n)$ satisfies the condition
$$[x^n]P(X, \sum \limits_{n\geq 0} a_n X^n)=0,\;\;\mbox{for all } n\in \mathbb{N}.$$
The first two equations are $ a_0+a_0=0$ and $a_0+a_1+1=0$. And for $n\geq 2$, $a_n$ appears for the first time in the $n-$th equation. As we have a new variable for every 
constraint except for the first two equations, there exist two solutions, which correspond to $a_0=0$ and $a_0=1$. 

Let $f$ be a solution of $P(X, f(X))=0$, $f$ has degree 2 over $\mathbb{F}_q(X)$, therefore $f$, $f^2$ and $f^4$ are linearly dependent over $\mathbb{F}_q(X)$. A relation can be 
found by writing them all as linear combinations of $1$ and $f$. That is,

\begin{align*}f^2&=\frac{X}{(1+X)^3}+\frac{1}{1+X}f\\
f^4&=(f^2)^2\\
&=\frac{X^2}{(1+X)^6}+\frac{1}{(1+X)^2} f^2 \\
&=\frac{X^2}{(1+X)^6}+ \frac{1}{(1+X)^2}\Big(\frac{X}{(1+X)^3}+\frac{1}{1+X}f\Big)\\
&=\frac{X}{(1+X)^6}+\frac{1}{(1+X)^3}f
\end{align*}

Therefore, $$f^4+\frac{1}{(1+X)^3}f^2+\frac{X}{(1+X)^4}f=0.$$
We apply the Cartier operators $\Lambda_0$ and $\Lambda_1$ repetitively to
$$f=\frac{(1+X)^4}{X}f^4+\frac{1+X}{X} f^2. $$

We have
\begin{align*}
\Lambda_0(f)&=f\\
\Lambda_1(f)&=\frac{f^2}{x}+xf^2+\frac{f}{x}=:f_1\\
\Lambda_0(f_1)&=f_1\\
\Lambda_1(f_1)&=f.\\
\end{align*}

Therefore $f$ is generated by the automaton below with initial state $i$:
\begin{tikzpicture}[shorten >=1pt,node distance=2cm,on grid,auto] 
   \node[state] (i)   {$i$}; 
   \node[state] (a) [right=of i]{$a$}; 

    \path[->] 
    (i) edge [loop above] node  {0} (i)
          edge [bend left] node  {1} (a)
    (a) edge [loop above] node  {0} (a)
          edge [bend left] node  {1} (i);

\end{tikzpicture}.

There are four sequences that can be defined by this automaton, and the only two that satisfy the condition $a_0+a_1=1=0$ are the sequences that correspond to the maps 
$$\pi_1(i)=0$$
$$\pi_1(a)=1,$$
and
$$\pi_2(i)=1$$
$$\pi_2(a)=0.$$

These two sequences are the solutions of $P(X, f(X))=0$, because we know from the beginning of the example that the equation $P(X, f(X))=0$ 
has exactly two solutions in $\mathbb{F}_q[X]$, and each solution is defined by the automaton above. These two sequences are the Thue-Morse sequence and 
its bitwise negation.
\end{example}

\begin{example}$Q(X,Y)=Y^2+(1+X)Y+X^2\in \mathbb{F}_2[X,Y]$. It can be shown with the same argument as for the first example, that
the equation $Q(X,f(X))=0$ has two solutions in $\mathbb{F}_q[X]$. 

Writing $f$, $f^2$ and $f^4$ as linear combinations of $1$ and $f$ over $\mathbb{F}_q(X)$, we find the relation
$$f^4+f^2+(X^2+X^3)f=0.$$

Applying $\Lambda_0$ and $\Lambda_1$ repetitively to $$f=\frac{f^4}{X^2+X^3}+\frac{f^2}{X^2+X^3}$$ we get

\begin{align*}
 \Lambda_0(f)&=\frac{f^2}{X(1+X)}+\frac{f}{X(1+X}=:f_1\\
 \Lambda_1(f)&=f_1\\
 \Lambda_0(f_1)&=\frac{f}{1+X}=:f_2\\
 \Lambda_1(f_1)&=\frac{f^2}{X^2(1+X)}+\frac{f}{X^2}=:f_3\\
 \Lambda_0(f_2)&=f_1\\
 \Lambda_1(f_2)&=0\\
 \Lambda_0(f_3)&=A_1(f_3)=f_3.\\
 \end{align*}

 Therefore $f$ can be generated by the automaton below with inital state $i$:
 
\qquad  \qquad  \qquad  \qquad \qquad \qquad  \begin{tikzpicture}[shorten >=1pt,node distance=2cm,on grid,auto] 
   \node[state] (i)   {$i$}; 
   \node[state] (a) [right=of i]{$a$}; 
   \node[state] (b) [below=of a]{$b$};
   \node[state] (c)  [below left=of a]{$c$};
   \node[state]  (0) [below=of b]{$0$};
   \path[->] 
    (i) edge node  {0,1} (a)
          
    (a) edge  node [swap] {0} (b)
          edge  node [swap] {1} (c)
    (b) edge [bend right] node [swap] {0} (a)
       edge node {1} (0)
     (c) edge [loop below] node {0,1} (c)  
     (0) edge [loop below] node {0,1} (0)  ;
\end{tikzpicture}

\end{example}
The equation $[X^1]Q(X,f(X))=0$ gives $a_0=a_1$, and the equation $[X^2]Q(X,f(X))=0$ gives $a_2=1$. The sequences that satisfy these two conditions are defined by this
automaton
with the map
$$\pi_1(i)=\pi_1(a)=0$$
$$\pi_1(c)=1 $$
$$\pi_1(0)=0 $$
or
$$\pi_2(i)=\pi_2(a)=1$$
$$\pi_2(c)=1 $$
$$\pi_2(0)=0. $$
These two sequences are the two solutions of $Q(X,f(X))=0$ in $\mathbb{F}_q[X].$

\end{document}